\documentclass[11pt]{article}

\usepackage[margin=1in]{geometry}
\usepackage{amsmath,amssymb,amsthm,mathtools}
\usepackage{booktabs}
\usepackage{enumitem}
\usepackage{hyperref}
\usepackage{bm}
\usepackage{xcolor}

\hypersetup{colorlinks=true,linkcolor=blue,citecolor=blue,urlcolor=blue}
\allowdisplaybreaks
\numberwithin{equation}{section}

\newtheorem{theorem}{Theorem}[section]

\newtheorem{remark}[theorem]{Remark}
\newtheorem{assumption}[theorem]{Assumption}

\newcommand{\R}{\mathbb{R}}
\newcommand{\ip}[2]{\left\langle #1,#2\right\rangle}
\newcommand{\norm}[1]{\left\lVert #1\right\rVert}
\newcommand{\normM}[2]{\left\lVert #1\right\rVert_{#2}}
\newcommand{\prox}{\operatorname{prox}}
\newcommand{\calL}{\mathcal{L}}
\newcommand{\calG}{\mathcal{G}}
\newcommand{\calX}{\mathcal{X}}
\newcommand{\calS}{\mathcal{S}}
\newcommand{\Id}{I}
\newcommand{\PD}{\mathrm{PD3O}}
\newcommand{\PDDY}{\mathrm{PDDY}}
\newcommand{\PAPC}{\mathrm{PAPC}}
\newcommand{\CV}{\mathrm{CV}}

\title{Unified Ergodic Primal-Dual Gap Rates\\with Unhalved Primal Stepsizes}
\author{Sirong Dai and Ming Yan\footnote{School of Data Science, The Chinese University of Hong Kong, Shenzhen}}
\date{\today}

\begin{document}
\maketitle

\begin{abstract}
We study ergodic primal-dual gap rates for first-order primal-dual methods applied to
\[
    \min_x f(x)+g(x)+h(Ax),
\]
where $f$ is smooth and convex, $g$ and $h$ are proper, closed, convex functions, and $A$ is linear.  Standard gap-rate proofs often impose the halved smooth-stepsize condition $\tau \le 1/L$, even though the corresponding convergence theory allows the larger range $\tau <2/L$.  We introduce a residual-to-gap transfer principle: positive residual terms in the one-step gap inequality are controlled by the decrease of a Lyapunov function.  This yields $O(1/K)$ ergodic primal-dual gap bounds with the unhalved primal stepsize $\tau <2/L$ for Condat--V\~u, PD3O, AFBA/PDDY, and PAPC/PDFP$^2$O, under their algorithm-dependent product conditions. We also give a two-dimensional counterexample showing that the fully separated rectangle $\tau <2/L$, $\tau\eta\|A\|^2<4/3$ cannot hold in the general three-function setting.
\end{abstract}

\section{Introduction}
\label{sec:introduction}

We consider the composite convex optimization problem
\begin{equation}
    \min_{x\in\mathbb R^n} F(x):=f(x)+g(x)+h(Ax),
    \label{eq:intro_problem}
\end{equation}
where $f$ is convex and differentiable with an $L$-Lipschitz continuous gradient, $g$ and $h$ are proper, closed, convex functions, and $A:\mathbb R^n\to\mathbb R^m$ is linear. Its saddle-point formulation is
\begin{equation}
    \min_{x\in\mathbb R^n}\max_{s\in\mathbb R^m}
    \mathcal L(x,s):= f(x)+g(x)+\langle Ax,s\rangle-h^*(s).
    \label{eq:intro_saddle}
\end{equation}
Problems of the form \eqref{eq:intro_problem} are standard in imaging and signal processing, machine learning, and distributed optimization. They have motivated a large family of first-order primal-dual splitting schemes, including Chambolle--Pock~\cite{ChambollePock2011}, Condat--V\~u~\cite{ChambollePock2016,Condat2013,Vu2013}, primal-dual fixed-point~\cite{ChenHuangZhang2016}, PAPC/PDFP$^2$O~\cite{drori2015simple,chen2013primal,loris2011generalization}, PD3O~\cite{Yan2018PD3O},
and AFBA/PDDY~\cite{LatafatPatrinos2017,salim2022dualize}. These algorithms are attractive because they separate the smooth gradient step, the nonsmooth proximal steps, and the applications of the linear map $A$ and its adjoint.

A central issue in the analysis of these methods is the admissible stepsize range. Let $\tau$ and $\eta$ denote the primal and dual stepsizes, and set
\begin{equation*}
    \alpha:=\frac{\tau L}{2},
    \qquad
    \beta:=\tau\eta\|A\|^2.
\end{equation*}
The bound $\alpha<1$, equivalently $\tau<2/L$, is unavoidable in the gradient-descent limit. The product condition on $\beta$, however, depends delicately on the algorithm and on the structure of the objective. For Chambolle--Pock and PAPC/PDFP$^2$O, the product bound can be enlarged from the classical $\beta\le1$ to the sharp condition $\beta<4/3$ \cite{LiYan2021}. For AFBA/PDDY, a recent analysis gives the coupled condition
\begin{equation*}
\alpha<1,\quad \beta\le\dfrac{4-2\alpha}{3-\alpha},
\end{equation*}
which interpolates between the classical product bound and the large-step regime~\cite{YanLi2024}. In contrast, the sufficient condition for PD3O remains $\alpha<1$ and $\beta\le1$.

For the convergence of the iterates, the difference between these regions is now relatively well understood. The situation for primal-dual gap rates is less satisfactory. Many standard ergodic gap proofs proceed from a one-step telescoping inequality and then force all residual terms to be nonpositive. When a smooth term is present, this often requires
\begin{equation}
    \frac{L}{2}-\frac{1}{2\tau}\le0,
    \qquad\text{that is,}\qquad
    \alpha \le \frac12.
    \label{eq:intro_halved_stepsize}
\end{equation}
Thus, the rate proof uses only half of the primal stepsize range allowed by the convergence proof. This phenomenon already appears in several primal-dual gap analyses: a clean $O(1/K)$ estimate is obtained, but at the cost of the halved smooth stepsize. The purpose of this paper is to remove this artificial loss while preserving the correct algorithm-dependent product restrictions.

The main idea is simple. We do not require the residual in the one-step gap inequality to be nonpositive. Instead, we show that the positive part of this residual is summable because it is controlled by the descent of a Lyapunov function. In abstract form, suppose
\begin{equation}
    \mathcal L(u^k,s)-\mathcal L(x,t^k)
    \le D_k(x,s)-D_{k+1}(x,s)+R_k,
    \label{eq:intro_gap_decomposition}
\end{equation}
where $(u^k,t^k)$ are the primal and dual outputs used for averaging. If, for a Lyapunov $V_k$, one has
\begin{equation}
    R_k\le \rho\,(V_k-V_{k+1}),
    \label{eq:intro_residual_domination}
\end{equation}
then both terms telescope, and an $O(1/K)$ ergodic primal-dual gap bound follows. This residual-to-gap transfer is the organizing principle of this paper. It explains why the halved condition \eqref{eq:intro_halved_stepsize} is not intrinsic: the residual may be positive, but it can be paid for by Lyapunov descent.

Our contributions are as follows.
\begin{enumerate}
    \item We formulate an abstract residual-to-gap theorem. The theorem converts a one-step gap decomposition of the form \eqref{eq:intro_gap_decomposition}, together with the residual domination estimate \eqref{eq:intro_residual_domination}, into an ergodic $O(1/K)$ primal-dual gap bound. The statement is written for arbitrary test pairs $(x,s)$, so it yields restricted primal-dual gap estimates, not only saddle-pair estimates.

    \item We apply the abstract theorem to four algorithms: Condat--V\~u, PD3O, AFBA/PDDY, and PAPC/PDFP$^2$O. In each case, the same proof template is used: derive the one-step gap inequality, identify the residual that prevents the classical proof from using $\tau <2/L$, and dominate this residual by a corresponding Lyapunov descent.

    \item We also include a low-dimensional counterexample showing that the fully separated rectangle
    \begin{equation*}
        \alpha<1,
        \qquad
        \beta<\frac43
    \end{equation*}
    cannot hold for the general three-function AFBA/PDDY or PD3O when both $f$ and $g$ are nonzero. This justifies the coupled AFBA/PDDY region and the more conservative PD3O product condition. The $4/3$ product bound is a special feature of PAPC/PDFP$^2$O-type reductions, not a universal three-function phenomenon.
\end{enumerate}

The rest of this paper is organized as follows. Section~\ref{sec:preliminaries} introduces the problem setup and notation. Section~\ref{sec:abstract_transfer} proves the abstract residual-to-gap theorem. Section~\ref{sec:algorithms} introduces the four algorithms and states the unified gap-rate theorem. Section~\ref{sec:verification} verifies the abstract theorem for Condat--V\~u, PD3O, AFBA/PDDY, and PAPC/PDFP$^2$O. Section~\ref{sec:counterexample} gives the counterexample excluding the separated $(2,4/3)$ rectangle in the general three-function case. Section~\ref{sec:conclusion} concludes with remarks on possible extensions, including whether a lifted Lyapunov function can further relax the PD3O product condition.

\section{Problem setup and notation}\label{sec:preliminaries}
Throughout this paper, $\|A\|$ denotes the spectral norm of $A$, and
\[
    \alpha:=\frac{\tau L}{2},
    \qquad
    \beta:=\tau\eta\|A\|^2.
\]
For a self-adjoint matrix $M$, define
\[
    \normM{z}{M}^2:=\ip{z}{Mz}.
\]
If $M$ is indefinite, this is only a quadratic form. For any real number $a$, we denote $(a)_+\coloneqq \max\{a,0\}$.

\begin{assumption}\label{ass:standing}
The function $f:\R^n\to\R$ is convex and differentiable with $L$-Lipschitz continuous gradient.  The functions $g:\R^n\to(-\infty,+\infty]$ and $h:\R^m\to(-\infty,+\infty]$ are proper, closed, and convex.  The saddle problem \eqref{eq:intro_saddle} has at least one saddle point $(x^\star,s^\star)$.
\end{assumption}

The saddle point condition is
\begin{equation*}
    0\in \nabla f(x^\star)+\partial g(x^\star)+A^\top s^\star,
    \qquad
    0\in \partial h^*(s^\star)-Ax^\star.
\end{equation*}
We use the following two standard inequalities.  For $L$-smooth convex $f$,
\begin{equation*}
    f(u)\le f(v)+\ip{\nabla f(v)}{u-v}+\frac L2\norm{u-v}^2,
\end{equation*}
whereas cocoercivity gives
\begin{equation*}
    \ip{u-v}{\nabla f(u)-\nabla f(v)}
    \ge \frac1L\norm{\nabla f(u)-\nabla f(v)}^2.
\end{equation*}
Equivalently,
\begin{equation}\label{eq:smooth-bregman}
    f(u)-f(v)
    \le \ip{\nabla f(u)}{u-v}
       -\frac1{2L}\norm{\nabla f(u)-\nabla f(v)}^2.
\end{equation}

\section{Abstract residual-to-gap theorem}\label{sec:abstract_transfer}

The following theorem is used for all four algorithms.  It separates the one-step gap calculation from the algorithm-specific convergence descent.

\begin{theorem}[Residual-to-gap transfer]\label{thm:abstract-transfer}
Let $\calL:X\times S\to(-\infty,+\infty]$ be convex in its first variable and concave in its second variable, and suppose that $(x^\star,s^\star)$ is a saddle point.  Let $\{u^k\}_{k\ge0}\subset X$ and $\{t^k\}_{k\ge0}\subset S$ be the primal and dual outputs used in the gap estimate.

Assume that, for every fixed test pair $(x,s)$, there exist a distance-like sequence $D_k(x,s)$, a residual sequence $R_k$, a nonnegative nonincreasing Lyapunov sequence $V_k$, and constants $\rho\ge0$, $\kappa\ge0$, and $B(x,s)\ge0$ such that, for all $k\ge0$,
\begin{align}
    \calL(u^k,s)-\calL(x,t^k)
    &\le D_k(x,s)-D_{k+1}(x,s)+R_k, \label{eq:H1}\\
    R_k&\le \rho\bigl(V_k-V_{k+1}\bigr), \label{eq:H2}\\
    D_k(x,s)&\ge -\kappa V_k-B(x,s). \label{eq:H3}
\end{align}
Define
\begin{equation*}
    \bar u^K:=\frac1K\sum_{k=0}^{K-1}u^k,
    \qquad
    \bar t^K:=\frac1K\sum_{k=0}^{K-1}t^k.
\end{equation*}
Then, for all $K\ge1$,
\begin{equation}\label{eq:abstract-bound}
    \calL(\bar u^K,s)-\calL(x,\bar t^K)
    \le
    \frac{D_0(x,s)+B(x,s)+(\rho+\kappa)V_0}{K}.
\end{equation}
Consequently, for any test sets $\calX\subset X$ and $\calS\subset S$ for which the right-hand side is uniformly finite,
\begin{equation*}
    \calG_{\calX,\calS}(\bar u^K,\bar t^K)
    :=\sup_{x\in\calX,\,s\in\calS}\{\calL(\bar u^K,s)-\calL(x,\bar t^K)\}
    =O(1/K).
\end{equation*}
If $(x^\star,s^\star)\in\calX\times\calS$, then $\calG_{\calX,\calS}(\bar u^K,\bar t^K)\ge0$.
\end{theorem}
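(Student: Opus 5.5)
The plan is to sum the one-step inequality \eqref{eq:H1} over $k=0,\dots,K-1$ for a fixed test pair $(x,s)$ and telescope both the distance terms and the residual terms. Summing gives
\[
\sum_{k=0}^{K-1}\bigl[\calL(u^k,s)-\calL(x,t^k)\bigr]\le D_0(x,s)-D_K(x,s)+\sum_{k=0}^{K-1}R_k .
\]
Applying \eqref{eq:H2} termwise, the residual sum is at most $\rho(V_0-V_K)\le\rho V_0$, because $V_K\ge0$ and $\rho\ge0$. For the distance term, \eqref{eq:H3} at index $K$ gives $-D_K(x,s)\le\kappa V_K+B(x,s)\le\kappa V_0+B(x,s)$, since $V$ is nonincreasing. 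Combining these yields the bound $D_0(x,s)+B(x,s)+(\rho+\kappa)V_0$ on the summed gap.

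Next, I pass from the average of the gaps to the gap at the averages using Jensen's inequality. Since $\calL(\cdot,s)$ is convex, $\calL(\bar u^K,s)\le\frac1K\sum_k\calL(u^k,s)$. Since $\calL(x,\cdot)$ is concave, $-\calL(x,\bar t^K)\le-\frac1K\sum_k\calL(x,t^k)$. Dividing the summed bound by $K$ then gives \eqref{eq:abstract-bound}.

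One point needs care here, and I expect it to be the only real obstacle: $\calL$ takes values in $(-\infty,+\infty]$. If some $\calL(u^k,s)=+\infty$, then \eqref{eq:H1} forces its right-hand side to be infinite. The hypotheses implicitly take $D_k$ and $R_k$ to be real, so each $\calL(u^k,s)$ is finite whenever the bound is meaningful. Jensen's inequality for extended-valued convex functions still holds with the usual conventions. The terms $-\calL(x,t^k)$ could only be $-\infty$, which would make the inequality trivial. I would state this convention briefly rather than belabor it.

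For the uniform statement, I take the supremum over $(x,s)\in\calX\times\calS$ of both sides. By assumption the numerator $D_0+B+(\rho+\kappa)V_0$ is uniformly bounded there; note that $\rho,\kappa$ may depend on $(x,s)$ only through this uniform finiteness. This gives $\calG_{\calX,\calS}=O(1/K)$.

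Finally, for nonnegativity, I choose the test pair $(x,s)=(x^\star,s^\star)$. This gives $\calG_{\calX,\calS}(\bar u^K,\bar t^K)\ge\calL(\bar u^K,s^\star)-\calL(x^\star,\bar t^K)$. The saddle-point inequalities
\[
\calL(x^\star,t)\le\calL(x^\star,s^\star)\le\calL(u,s^\star)\quad\text{for all }u,t
\]
show that this difference is nonnegative.
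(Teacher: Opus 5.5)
Your proposal is correct and follows the paper's proof step for step: sum \eqref{eq:H1}--\eqref{eq:H2} to telescope both $D_k$ and $V_k$, bound $-D_K(x,s)\le\kappa V_0+B(x,s)$ using \eqref{eq:H3} and the monotonicity of $V_k$, pass to the averages by convexity/concavity of $\calL$ (Jensen), and obtain nonnegativity from the saddle-point inequalities at $(x^\star,s^\star)$. Your remark on extended-valued $\calL$ is extra care that the paper omits, and it does not change the argument.
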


\begin{proof}
Let
\[
    G_k(x,s):=\calL(u^k,s)-\calL(x,t^k).
\]
By \eqref{eq:H1} and \eqref{eq:H2},
\begin{equation*}
    G_k(x,s)
    \le D_k(x,s)-D_{k+1}(x,s)+\rho(V_k-V_{k+1}).
\end{equation*}
Summing from $k=0$ to $K-1$ gives
\begin{equation*}
    \sum_{k=0}^{K-1}G_k(x,s)
    \le D_0(x,s)-D_K(x,s)+\rho(V_0-V_K).
\end{equation*}
Using \eqref{eq:H3} and $0\le V_K\le V_0$,
\[
    -D_K(x,s)\le \kappa V_K+B(x,s)\le \kappa V_0+B(x,s),
    \qquad
    V_0-V_K\le V_0.
\]
Therefore
\begin{equation}\label{eq:abstract-summed-final}
    \sum_{k=0}^{K-1}G_k(x,s)
    \le D_0(x,s)+B(x,s)+(\rho+\kappa)V_0.
\end{equation}
By convexity in the primal variable and concavity in the dual variable,
\[
    \calL(\bar u^K,s)\le \frac1K\sum_{k=0}^{K-1}\calL(u^k,s),
    \qquad
    \calL(x,\bar t^K)\ge \frac1K\sum_{k=0}^{K-1}\calL(x,t^k).
\]
Thus
\[
    \calL(\bar u^K,s)-\calL(x,\bar t^K)
    \le \frac1K\sum_{k=0}^{K-1}G_k(x,s),
\]
and \eqref{eq:abstract-bound} follows from \eqref{eq:abstract-summed-final}.  If the test set contains a saddle point, then
\[
    \calL(\bar u^K,s^\star)\ge \calL(x^\star,s^\star)\ge \calL(x^\star,\bar t^K),
\]
which implies nonnegativity of the restricted supremum.
\end{proof}

\begin{remark}\label{rem:why-helps}
The standard primal-dual gap proof corresponds to $R_k\le0$.  Then one may take $\rho=0$, but the proof often requires $\tau \le1/L$.  Theorem~\ref{thm:abstract-transfer} allows $R_k>0$ as long as the residual can be paid for by the descent $V_k-V_{k+1}$.  This is the mechanism that removes the halved smooth stepsize.
\end{remark}

\section{Algorithms and unified theorem}\label{sec:algorithms}

We now instantiate the abstract framework for four primal-dual splitting algorithms.  For each method, we specify the iteration and the primal-dual outputs used in the ergodic gap estimate.

\paragraph{Condat--V\~u.}
The Condat--V\~u iteration, also known as Chambolle--Pock with a smooth term, is
\begin{align}
    x^{k+1}
    &=\prox_{\tau g}\left(x^k-\tau\nabla f(x^k)-\tau A^\top s^k\right),\label{eq:cv-x}\\
    s^{k+1}
    &=\prox_{\eta h^*}\left(s^k+\eta A(2x^{k+1}-x^k)\right).\label{eq:cv-s}
\end{align}
The primal and dual outputs used in the analysis are
\[
    u_{\CV}^k:=x^{k+1},
    \qquad
    t_{\CV}^k:=s^{k+1}.
\]

\paragraph{PD3O.}
The PD3O iteration is
\begin{align}
    x^k&=\prox_{\tau g}(z^k), \label{eq:pd3o-x}\\
    s^{k+1}
    &=\prox_{\eta h^*}\Bigl((\Id-\tau\eta AA^\top)s^k
      +\eta A(2x^k-z^k-\tau\nabla f(x^k))\Bigr), \label{eq:pd3o-s}\\
    z^{k+1}
    &=x^k-\tau\nabla f(x^k)-\tau A^\top s^{k+1}. \label{eq:pd3o-z}
\end{align}
The gap outputs are
\[
    u_{\PD}^k:=x^k,
    \qquad
    t_{\PD}^k:=s^{k+1}.
\]

\paragraph{PDDY/AFBA.}
The PDDY/AFBA iteration is
\begin{align}
    s^{k+1}&=\prox_{\eta h^*}(s^k+\eta Ay^k), \label{eq:pddy-s}\\
    x^{k+1}&=y^k-\tau A^\top(s^{k+1}-s^k), \label{eq:pddy-x}\\
    y^{k+1}&=\prox_{\tau g}\bigl(x^{k+1}-\tau A^\top s^{k+1}-\tau\nabla f(x^{k+1})\bigr). \label{eq:pddy-y}
\end{align}
The gap outputs are
\[
    u_{\PDDY}^k:=y^k,
    \qquad
    t_{\PDDY}^k:=s^{k+1}.
\]

\paragraph{PAPC/PDFP$^2$O.}
When $g\equiv0$, PD3O and PDDY reduce to PAPC/PDFP$^2$O.  The iteration is
\begin{align}
    s^{k+1}
    &=\prox_{\eta h^*}\Bigl((\Id-\tau\eta AA^\top)s^k
       +\eta A(x^k-\tau\nabla f(x^k))\Bigr), \label{eq:papc-s}\\
    x^{k+1}
    &=x^k-\tau\nabla f(x^k)-\tau A^\top s^{k+1}. \label{eq:papc-x}
\end{align}
The gap outputs are
\[
    u_{\PAPC}^k:=x^{k+1},
    \qquad
    t_{\PAPC}^k:=s^{k+1}.
\]

The role of the next theorem is to show that each algorithm admits a one-step gap decomposition whose residual is controlled by a Lyapunov function.  The algorithm-specific work is therefore confined to verifying the three hypotheses of Theorem~\ref{thm:abstract-transfer}.

\begin{theorem}[Unified primal-dual gap rates]\label{thm:unified}
Let Assumption~\ref{ass:standing} hold.  For each algorithm, define the ergodic averages
\begin{equation*}
    \bar u_{\mathcal A}^K:=\frac1K\sum_{k=0}^{K-1}u_{\mathcal A}^k,
    \qquad
    \bar s^K:=\frac1K\sum_{k=0}^{K-1}s^{k+1}.
\end{equation*}
Assume the corresponding stepsize condition in Table~\ref{tab:conditions}.  Then, for every fixed test pair $(x,s)$, there exists a finite constant $C_{\mathcal A}(x,s)$ such that
\begin{equation*}
    \calL(\bar u_{\mathcal A}^K,s)-\calL(x,\bar s^K)
    \le \frac{C_{\mathcal A}(x,s)}{K},
    \qquad K\ge1.
\end{equation*}
Consequently, every restricted primal-dual gap over a test set on which $C_{\mathcal A}(x,s)$ is uniformly bounded is $O(1/K)$.  In particular,
\begin{equation*}
    0\le
    \calL(\bar u_{\mathcal A}^K,s^\star)-\calL(x^\star,\bar s^K)
    \le \frac{C_{\mathcal A}(x^\star,s^\star)}{K}.
\end{equation*}
\end{theorem}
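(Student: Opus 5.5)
The plan is to deduce Theorem~\ref{thm:unified} from Theorem~\ref{thm:abstract-transfer} by checking \eqref{eq:H1}--\eqref{eq:H3} for each of the four algorithms. In every case the primal output $u_{\mathcal A}^k$ and the dual output $t^k=s^{k+1}$ are as listed in Section~\ref{sec:algorithms}. The constant will then be $C_{\mathcal A}(x,s)=D_0(x,s)+B(x,s)+(\rho+\kappa)V_0$. The nonnegativity statement, and the $O(1/K)$ restricted-gap consequence, are already contained in Theorem~\ref{thm:abstract-transfer}. The work therefore reduces to three algorithm-specific tasks: a one-step gap inequality, a residual bound, and a lower bound on the distance term.

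\textbf{Step 1 (one-step gap, \eqref{eq:H1}).} For each method I would write the proximal steps as subgradient inclusions. For Condat--V\~u these are $\tau^{-1}(x^k-x^{k+1})-\nabla f(x^k)-A^\top s^k\in\partial g(x^{k+1})$ and $\eta^{-1}(s^k-s^{k+1})+A(2x^{k+1}-x^k)\in\partial h^*(s^{k+1})$. I would test the first against $x$ and the second against $s$. For the smooth part, I would bound $f(u^k)-f(x)$ using the descent lemma at the gradient point together with convexity, or \eqref{eq:smooth-bregman} where that is sharper. Collecting terms through the polarization identity should give
\[
\calL(u^k,s)-\calL(x,t^k)\le \tfrac12\|z^k-z\|_M^2-\tfrac12\|z^{k+1}-z\|_M^2+R_k .
\]
Here $z=(x,s)$ or $z=(z,s)$ for PD3O, and $M$ is the algorithm's metric: $\begin{pmatrix}\tau^{-1}I&-A^\top\\-A&\eta^{-1}I\end{pmatrix}$ for Condat--V\~u, and $\tau^{-1}I$ together with $\eta^{-1}I-\tau AA^\top$ in the dual block for PD3O, PAPC and PDDY. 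For Condat--V\~u the residual takes the form $R_k=(\tfrac L2-\tfrac1{2\tau})\|x^{k+1}-x^k\|^2$ plus nonpositive terms. For the other methods the residual is an analogous $(\tfrac L2-\tfrac1{2\tau})\|\cdot\|^2$ term in the primal increment, possibly combined with gradient-difference terms. Setting $D_k=\tfrac12\|z^k-z\|_M^2$, one sees that this residual is nonpositive only if $\alpha\le\tfrac12$.

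\textbf{Step 2 (residual domination, \eqref{eq:H2}).} For $V_k$ I would take the Lyapunov function from the existing convergence theory at the saddle point. For Condat--V\~u, PD3O and PAPC this is $V_k=\|z^k-z^\star\|_{M}^2$, possibly with a small added term such as $c\|x^k-x^{k-1}\|^2$ or a gradient-difference term. For PDDY it is the coupled-region Lyapunov function of \cite{YanLi2024}. The key property is a descent inequality of the form
\[
V_k-V_{k+1}\ge c_{\mathcal A}\,\|\text{increment}\|^2
\]
with $c_{\mathcal A}>0$, valid exactly under the conditions in Table~\ref{tab:conditions}, namely $\alpha<1$ together with the product bound. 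Since $(\tfrac L2-\tfrac1{2\tau})_+\|\text{increment}\|^2\le (\tfrac L2)\|\text{increment}\|^2$, we get \eqref{eq:H2} with $\rho=(L/2)/c_{\mathcal A}$, and the same $V_k$ is nonincreasing and nonnegative. I expect this step to be the \textbf{main obstacle}, for two reasons. First, the increment appearing in $R_k$ must match the quantity whose squared norm is strictly controlled by the descent; if it does not, I would use \eqref{eq:smooth-bregman} to move part of the residual into gradient-difference terms. Second, $M$ must be positive semidefinite, or at least the descent must hold, up to $\beta<4/3$ for PAPC and up to the coupled PDDY bound, where $M$ becomes indefinite. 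In those large-$\beta$ regimes I would work with the specialized Lyapunov functions of \cite{LiYan2021,YanLi2024}, which add cross terms built from consecutive iterates. I would first try to absorb the indefinite part of $\|\cdot\|_M^2$ into these cross terms.

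\textbf{Step 3 (lower bound, \eqref{eq:H3}).} When $M\succeq0$ we have $D_k\ge0$, so $\kappa=B=0$. When $M$ is indefinite, as in the PAPC regime $\beta\in(1,4/3)$ and part of the PDDY region, I would split $z^k-z=(z^k-z^\star)+(z^\star-z)$. The negative part of $M$ is controlled by $V_k$ through its positive-definite cross-term structure, and Young's inequality then gives $D_k\ge-\kappa V_k-B(x,s)$ with $B(x,s)=C\|z^\star-z\|^2$. This is finite for each fixed $(x,s)$ and uniformly bounded on bounded test sets. Applying Theorem~\ref{thm:abstract-transfer} then yields the claimed bound for every fixed test pair.
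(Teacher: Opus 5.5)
Your architecture matches the paper's. You verify \eqref{eq:H1}--\eqref{eq:H3} algorithm by algorithm, with $D_k$ a distance to the test pair and $V_k$ a convergence Lyapunov centered at $(x^\star,s^\star)$. For PAPC with $\beta>1$, your lower bound obtained by splitting $s^k-s$ around $s^\star$ is what the paper does via $R_0\succeq-\kappa_0R_\theta$.

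\textbf{The gap: Step~2 for PDDY/AFBA.} Step~2 carries the actual content of the theorem. You assert that the existing Lyapunov functions satisfy $V_k-V_{k+1}\ge c_{\mathcal A}\norm{\cdot}^2$ with $c_{\mathcal A}>0$ on the whole Table region, but for PDDY/AFBA this cannot simply be cited. In the coupled-region descent, the distance identity gives $-\frac1{2\tau}\norm{\delta_x^k}^2$. The bound on $T_1$ returns $\frac{\alpha}{2\tau}\norm{\delta_x^k}^2$ plus $(1-\alpha)$ times the Young part of the cross term $-\ip{A\delta_x^k}{\delta_s^k}$. To reach the product region $[\alpha+4(1-\alpha)\theta(1-\theta)]\beta<1$, you weight the history bound \eqref{eq:pddy-cross-history} by $2\theta-1$ and Young's inequality (parameter $1-\theta$) by $2(1-\theta)$. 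With that weighting, the Young part contributes exactly $\frac{1-\alpha}{2\tau}\norm{\delta_x^k}^2$. The primal coefficients therefore cancel, and only $\normM{\delta_s^k}{N_\theta}^2$ is controlled. For $\alpha>1/2$, nothing then pays for the residual $\frac{2\alpha-1}{2\tau}\norm{x^k-y^k}^2$.

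The missing idea is to perturb that split, using Young with $1-\theta+\varepsilon$. This yields $a_\theta=\frac{(1-\alpha)\varepsilon}{2\tau(1-\theta+\varepsilon)}>0$, at the cost of raising $c_\theta$ by $4\varepsilon(1-\alpha)(1-\theta)$. That cost is affordable only because the product condition is strict. The Table's PDDY curve is exactly this condition with $\theta=1/\beta$.

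\textbf{The residual in Step~1 is too optimistic.} For PDDY the smooth term is linearized at $x^k$ but evaluated at $y^k$, so the relevant increment is $x^k-y^k=\delta_x^k+\tau A^\top\delta_s^k$. The gap inequality telescopes in the unweighted distance $\frac1{2\tau}\norm{x^k-x}^2+\frac1{2\eta}\norm{s^k-s}^2$, not in a metric with $\eta^{-1}\Id-\tau AA^\top$. The $AA^\top$ piece lands in the residual as $-\frac1{2\eta}\normM{\delta_s^k}{\Id-\tau\eta AA^\top}^2$; the same term appears in the PAPC gap inequality.

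Whenever $\beta>1$, which the Table allows for PAPC and for PDDY, this term is positive in some directions. So $R_k$ also contains $\frac{(\beta-1)_+}{2\eta}\norm{\delta_s^k}^2$, and $\rho=(L/2)/c_{\mathcal A}$ with a purely primal $c_{\mathcal A}$ does not suffice. You need strict descent in both $\norm{\delta_x^k}^2$ and $\norm{\delta_s^k}^2$. For PAPC, \cite{LiYan2021} provides this when $\theta\in(3/4,1]$ and $\theta\beta<1$. For PDDY it requires the modification above, even when $\beta\le1$, because of the $\tau A^\top\delta_s^k$ component of $x^k-y^k$.

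\textbf{Smaller points.} For Condat--V\~u the remainder is not ``nonpositive terms'' either, because of $\ip{A\delta_x^k}{\delta_s^k}$. It is harmless only because $\mathcal M\succ0$ gives $R_k\le\frac L2\norm{\delta_x^k}^2$, and the descent supplies $\frac{1-\alpha-\beta}{2\tau}\norm{\delta_x^k}^2$. For PD3O the reference point must be $z(x,s)=x-\tau\nabla f(x)-\tau A^\top s$. Using \eqref{eq:smooth-bregman} between $x^k$ and $x$ is required, not optional, since the gradient point and the output coincide.
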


\begin{table}[h]
\centering
\caption{Admissible regions for Theorem~\ref{thm:unified}.  Here $\alpha=\tau L/2$ and $\beta=\tau\eta\|A\|^2$.}
\label{tab:conditions}
\begin{tabular}{l|l|l}
\toprule
Algorithm & New condition & Previous condition \\
\hline
Condat--V\~u
& $\alpha+\beta< 1$
& $2\alpha+\beta\leq 1$~\cite{ChambollePock2016}\\
PD3O
& $\alpha<1,\;\beta\le1$
& $2\alpha\le 1,\;\beta\le1$~\cite{Yan2018PD3O} \\
PDDY/AFBA
& $\alpha<1,\;\beta<\dfrac{4\alpha-3+\sqrt{9-8\alpha}}{2\alpha}$
& $2\alpha\le 1,\;\beta<\dfrac{8\alpha-3+\sqrt{9-16\alpha}}{4\alpha}$~\cite{YanLi2024}\\
PAPC/PDFP$^2$O ($g\equiv0$)
& $\alpha<1,\;\beta<4/3$
& $2\alpha\le1,\;\beta\le 1 $~\cite{drori2015simple}\\
\hline
\end{tabular}

\end{table}

\section{Algorithm-specific verification}\label{sec:verification}

This section proves Theorem~\ref{thm:unified}.  Each subsection verifies the three assumptions of Theorem~\ref{thm:abstract-transfer}: a one-step gap inequality, a residual domination estimate, and a lower bound on the distance term.

Let
\[
    \delta_x^k:=x^k-x^{k+1},\qquad 
    \delta_z^k:=z^k-z^{k+1},\qquad
    \delta_s^k:=s^k-s^{k+1}.
\]

\subsection{Verification for Condat--V\~u}\label{sec:condat-vu}
Define the block operators
\begin{equation*}
    \mathcal M:=
    \begin{pmatrix}
        \tau^{-1}\Id & -A^\top\\
        -A & \eta^{-1}\Id
    \end{pmatrix}.
\end{equation*}
Since \(\alpha+\beta<1\), we have \(\beta<1\), and hence \(\mathcal M\succ0.\)
For any \(u,v\),
\[
    \frac1\eta\norm{v}^2-2\ip{Au}{v} \ge -\eta\norm{Au}^2 \ge -\eta\|A\|^2\norm{u}^2,
\]
and hence
\[
    \frac12\normM{(u,v)}{\mathcal M}^2 -\frac{L}{4}\norm{u}^2 \ge \left(\frac1{2\tau}-\frac{L}{4}-\frac\eta2\|A\|^2\right)\norm{u}^2 = \frac{1-\alpha-\beta}{2\tau}\norm{u}^2.
\]

The distance term for the one-step gap is
\begin{equation*}
\begin{aligned}
    D_k^{\CV}(x,s) &:= \frac12 \normM{(x^k-x,s^k-s)}{\mathcal M}^2
\end{aligned}
\end{equation*}
Because \(\mathcal M\succ0\), we have \(D_k^{\CV}(x,s)\ge0\).
The positive Lyapunov is centered at a saddle point:
\begin{equation*}
    V_k^{\CV} := D_k^{\CV}(x^\star,s^\star) = \frac12 \normM{(x^k-x^\star,s^k-s^\star)}{\mathcal M}^2.
\end{equation*}

\subsubsection{One-step gap inequality}

The proximal steps imply
\begin{align*}
    q_g^{k+1} &:= \frac1\tau\delta_x^k-\nabla f(x^k)-A^\top s^k \in\partial g(x^{k+1}),\\
    q_h^{k+1} &:= \frac1\eta\delta_s^k+A(2x^{k+1}-x^k) \in\partial h^*(s^{k+1}).
\end{align*}
Using convexity of \(g\), convexity of \(h^*\), and smoothness of \(f\) at \(x^k\), we have
\begin{align*}
    g(x^{k+1})-g(x) &\le \ip{q_g^{k+1}}{x^{k+1}-x},\\
    f(x^{k+1})-f(x) &\le \ip{\nabla f(x^k)}{x^{k+1}-x} +\frac L2\norm{\delta_x^{k}}^2,\\
    h^*(s^{k+1})-h^*(s) &\le \ip{q_h^{k+1}}{s^{k+1}-s}.
\end{align*}
Combining these three inequalities gives~\cite[Lemma 1]{ChambollePock2016}
\begin{align}\label{eq:cv-gap-pre}
    \calL(x^{k+1},s)-\calL(x,s^{k+1}) &\le \frac L2\norm{\delta_x^k}^2 +\frac1\tau\ip{\delta_x^k}{x^{k+1}-x} +\frac1\eta\ip{\delta_s^k}{s^{k+1}-s} \\
    &\qquad -\ip{A\delta_x^k}{s^{k+1}-s} -\ip{A(x^{k+1}-x)}{\delta_s^k} \notag\\
    &\le D_k^{\CV}(x,s)-D_{k+1}^{\CV}(x,s) -\frac12\normM{(\delta_x^k,\delta_s^k)}{\mathcal M}^2 +\frac L2\norm{\delta_x^k}^2.\notag
\end{align}

We have
\begin{align*}
    -\frac12\normM{(\delta_x^k,\delta_s^k)}{\mathcal M}^2 +\frac L2\norm{\delta_x^k}^2 &= \frac L4\norm{\delta_x^k}^2-\left(\frac12\normM{(\delta_x^k,\delta_s^k)}{\mathcal M}^2 -\frac L4\norm{\delta_x^k}^2\right)\\
    &\le\left(\frac{\alpha}{1-\alpha-\beta}-1\right)\left(\frac12\normM{(\delta_x^k,\delta_s^k)}{\mathcal M}^2 -\frac L4\norm{\delta_x^k}^2\right).
\end{align*}
Denote
\[
    \boxed{\rho_{\CV}:=\frac{(2\alpha+\beta-1)_+}{1-\alpha-\beta}}.
\]
Thus~\eqref{eq:cv-gap-pre} implies
\[
    \calL(x^{k+1},s)-\calL(x,s^{k+1}) \le    D_k^{\CV}(x,s)-D_{k+1}^{\CV}(x,s)
    +R_k^{\CV},
\]
where
\[
    R_k^{\CV}:=\rho_{\CV}\left(\frac12\normM{(\delta_x^k,\delta_s^k)}{\mathcal M}^2 -\frac L4\norm{\delta_x^k}^2\right).
\]

\subsubsection{Convergence Lyapunov descent}

Let
\[
    q_g^\star:=-\nabla f(x^\star)-A^\top s^\star\in\partial g(x^\star), \qquad q_h^\star:=Ax^\star\in\partial h^*(s^\star).
\]
By monotonicity of \(\partial g\) and \(\partial h^*\),
\begin{equation}\label{eq:cv-monotonicity}
    \ip{x^{k+1}-x^\star}{q_g^{k+1}-q_g^\star} + \ip{s^{k+1}-s^\star}{q_h^{k+1}-q_h^\star} \ge0.
\end{equation}
Substituting the expressions of \(q_g^{k+1}\) and \(q_h^{k+1}\), and using the block metric \(\mathcal M\), inequality~\eqref{eq:cv-monotonicity} gives
\begin{align}\label{eq:cv-fundamental}
    0\le{}& V_k^{\CV}-V_{k+1}^{\CV} -\frac12\normM{(\delta_x^k,\delta_s^k)}{\mathcal M}^2 -\ip{x^{k+1}-x^\star}{\nabla f(x^k)-\nabla f(x^\star)}.
\end{align}
We now estimate the last term by cocoercivity.  We have
\begin{align*}
    -\ip{x^{k+1}-x^\star}{\nabla f(x^k)-\nabla f(x^\star)} &= -\ip{x^k-x^\star}{\nabla f(x^k)-\nabla f(x^\star)} +\ip{\delta_x^k}{\nabla f(x^k)-\nabla f(x^\star)}\\
    &\le -\frac1L\norm{\nabla f(x^k)-\nabla f(x^\star)}^2 +\ip{\delta_x^k}{\nabla f(x^k)-\nabla f(x^\star)}\\
    &\le \frac L4\norm{\delta_x^k}^2.
\end{align*}
Therefore~\eqref{eq:cv-fundamental} implies
\begin{align*}
    \frac12\normM{(\delta_x^k,\delta_s^k)}{\mathcal M}^2 -\frac L4\norm{\delta_x^k}^2 \le V_k^{\CV}-V_{k+1}^{\CV}.
\end{align*}
Hence
\begin{equation*}
    R_k^{\CV} \le \rho_{\CV}(V_k^{\CV}-V_{k+1}^{\CV}),
\end{equation*}

Since \(D_k^{\CV}(x,s)\ge0\), Theorem~\ref{thm:abstract-transfer} gives
\begin{equation*}
    \calL(\bar x^K,s)-\calL(x,\bar s^K) \le  \frac{D_0^{\CV}(x,s)+\rho_{\CV}V_0^{\CV}}{K},
\end{equation*}
where
\[
    \bar x^K:=\frac1K\sum_{k=0}^{K-1}x^{k+1},    \qquad     \bar s^K:=\frac1K\sum_{k=0}^{K-1}s^{k+1}.
\]
This proves the Condat--V\~u part of Theorem~\ref{thm:unified}.

\subsection{Verification for PD3O}\label{sec:pd3o}

Set
\begin{equation*}
    H:=\Id-\tau\eta AA^\top.
\end{equation*}
Under $\beta\le1$, $H\succeq0$.  For arbitrary test points $(x,s)$, define
\begin{equation*}
    z(x,s):=x-\tau\nabla f(x)-\tau A^\top s.
\end{equation*}
At the saddle point, write
\begin{equation*}
    z^\star:=z(x^\star,s^\star)=x^\star-\tau\nabla f(x^\star)-\tau A^\top s^\star.
\end{equation*}
Define
\begin{align*}
    D_k^{\PD}(x,s) :=&\frac1{2\tau}\norm{z^k-z(x,s)}^2
      +\frac1{2\eta}\normM{s^k-s}{H}^2,\\
    V_k^{\PD}:=&D_k^{\PD}(x^\star,s^\star).
\end{align*}
%Since $H\succeq0$, $D_k^{\PD}(x,s)\ge0$, so the lower-bound condition \eqref{eq:H3} holds with $\kappa=0$ and $B=0$.

\subsubsection{One-step gap inequality}

The proximal steps imply
\begin{align*}
    q_g^k&:=\frac1\tau(z^k-x^k)\in\partial g(x^k), \\
    q_h^{k+1}&:=\frac1\eta \delta_s^k-\tau AA^\top s^k
       +A(2x^k-z^k-\tau\nabla f(x^k))
       \in\partial h^*(s^{k+1}).
\end{align*}
Using the $z$-update, we get the identities
\begin{align}
    q_g^k+\nabla f(x^k)+A^\top s^{k+1}
    &=\frac1\tau\delta_z^k, \label{eq:pd3o-id1}\\
    q_h^{k+1}-Ax^k
    &=-A\delta_z^k+\frac1\eta H\delta_s^k. \label{eq:pd3o-id2}
\end{align}
Now we use the convexity of $g$, the convexity of $h^*$, and \eqref{eq:smooth-bregman} to obtain:
\begin{align*}
&\quad\calL(x^k,s)-\calL(x,s^{k+1}) \\
&=f(x^k)-f(x)+g(x^k)-g(x)+\ip{Ax^k}{s}-\ip{Ax}{s^{k+1}}
    +h^*(s^{k+1})-h^*(s)\\
&\le
\ip{\nabla f(x^k)}{x^k-x}
-\frac1{2L}\norm{\nabla f(x^k)-\nabla f(x)}^2
+\ip{q_g^k}{x^k-x}\\
&\quad
+\ip{q_h^{k+1}}{s^{k+1}-s}
+\ip{Ax^k}{s}-\ip{Ax}{s^{k+1}}\\
&\le
\ip{\nabla f(x^k) + q_g^k + A^\top s^{k+1}}{x^k-x}
-\frac1{2L}\norm{\nabla f(x^k)-\nabla f(x)}^2\\
&\quad
    +\ip{q_h^{k+1}-Ax^k}{s^{k+1}-s}.
    \end{align*}
Therefore, using \eqref{eq:pd3o-id1} and \eqref{eq:pd3o-id2},
\begin{align}\label{eq:pd3o-gap-start}
&\calL(x^k,s)-\calL(x,s^{k+1}) \\
&\le
\frac1\tau\ip{\delta_z^k}{x^k-x-\tau A^\top(s^{k+1}-s)}
+\frac1\eta\ip{H\delta_s^k}{s^{k+1}-s}
-\frac1{2L}\norm{\nabla f(x^k)-\nabla f(x)}^2.
\notag
\end{align}
By the definitions of $z^{k+1}$ and $z(x,s)$,
\begin{equation}\label{eq:pd3o-transform}
    x^k-x-\tau A^\top(s^{k+1}-s)
    =z^{k+1}-z(x,s)+\tau(\nabla f(x^k)-\nabla f(x)).
\end{equation}
Substituting \eqref{eq:pd3o-transform} into \eqref{eq:pd3o-gap-start} yields
\begin{align*}
&\calL(x^k,s)-\calL(x,s^{k+1})\\
&\le
\frac1\tau\ip{\delta_z^k}{z^{k+1}-z(x,s)}
+\frac1\eta\ip{H\delta_s^k}{s^{k+1}-s}
+\ip{\delta_z^k}{\nabla f(x^k)-\nabla f(x)}
-\frac1{2L}\norm{\nabla f(x^k)-\nabla f(x)}^2.
\end{align*}
Using the identity $\ip{a-b}{b-c}=\frac12\|a-c\|^2-\frac12\|b-c\|^2-\frac12\|a-b\|^2$ twice gives
\begin{align*}
\frac1\tau\ip{\delta_z^k}{z^{k+1}-z(x,s)}
&=\frac1{2\tau}\norm{z^k-z(x,s)}^2
 -\frac1{2\tau}\norm{z^{k+1}-z(x,s)}^2
 -\frac1{2\tau}\norm{\delta_z^k}^2,\\
\frac1\eta\ip{H\delta_s^k}{s^{k+1}-s}
&=\frac1{2\eta}\normM{s^k-s}{H}^2
 -\frac1{2\eta}\normM{s^{k+1}-s}{H}^2
 -\frac1{2\eta}\normM{\delta_s^k}{H}^2.
\end{align*}
Finally,
\[
    \ip{\delta_z^k}{\nabla f(x^k)-\nabla f(x)}
    -\frac1{2L}\norm{\nabla f(x^k)-\nabla f(x)}^2
    \le \frac L2\norm{\delta_z^k}^2.
\]
Consequently,
\begin{equation*}
\begin{aligned}
&\calL(x^k,s)-\calL(x,s^{k+1})\\
&\le
D_k^{\PD}(x,s)-D_{k+1}^{\PD}(x,s)
+\frac{2\alpha-1}{2\tau}\norm{\delta_z^k}^2
-\frac1{2\eta}\normM{\delta_s^k}{H}^2.
\end{aligned}
\end{equation*}
Thus the residual can be chosen as
\begin{equation}\label{eq:R-pd3o}
    R_k^{\PD}:=\frac{(2\alpha-1)_+}{2\tau}\norm{\delta_z^k}^2.
\end{equation}

\subsubsection{Lyapunov descent and residual domination}

Let
\[
    q_g^\star:=-\nabla f(x^\star)-A^\top s^\star\in\partial g(x^\star),
    \qquad
    q_h^\star:=Ax^\star\in\partial h^*(s^\star).
\]
Monotonicity of $\partial g$ and $\partial h^*$ gives
\begin{equation}\label{eq:pd3o-monotonicity}
    \ip{x^k-x^\star}{q_g^k-q_g^\star}
    +\ip{s^{k+1}-s^\star}{q_h^{k+1}-q_h^\star}\ge0.
\end{equation}
Using \eqref{eq:pd3o-id1}--\eqref{eq:pd3o-id2}, \eqref{eq:pd3o-monotonicity} becomes
\begin{align*}
0\le{}&
\frac1\tau\ip{\delta_z^k}{x^k-x^\star-\tau A^\top(s^{k+1}-s^\star)}
+\frac1\eta\ip{H\delta_s^k}{s^{k+1}-s^\star}\\
&-\ip{x^k-x^\star}{\nabla f(x^k)-\nabla f(x^\star)}.
\end{align*}
Since
\[
    x^k-x^\star-\tau A^\top(s^{k+1}-s^\star)
    =z^{k+1}-z^\star+\tau(\nabla f(x^k)-\nabla f(x^\star)),
\]
we obtain
\begin{align*}
0\le{}&
V_k^{\PD}-V_{k+1}^{\PD}
-\frac1{2\tau}\norm{\delta_z^k}^2
-\frac1{2\eta}\normM{\delta_s^k}{H}^2 \\
&+\ip{\delta_z^k}{\nabla f(x^k)-\nabla f(x^\star)}
-\ip{x^k-x^\star}{\nabla f(x^k)-\nabla f(x^\star)}.
\notag
\end{align*}
By cocoercivity and Young's inequality,
\begin{align*}
&\ip{\delta_z^k}{\nabla f(x^k)-\nabla f(x^\star)}
-\ip{x^k-x^\star}{\nabla f(x^k)-\nabla f(x^\star)}\\
&\le
\ip{\delta_z^k}{\nabla f(x^k)-\nabla f(x^\star)}
-\frac1L\norm{\nabla f(x^k)-\nabla f(x^\star)}^2
\le \frac L4\norm{\delta_z^k}^2.
\end{align*}
Therefore
\begin{equation}\label{eq:descent-pd3o}
    V_{k+1}^{\PD}
    \le V_k^{\PD}
    -\frac{1-\tau L/2}{2\tau}\norm{\delta_z^k}^2
    -\frac1{2\eta}\normM{\delta_s^k}{H}^2.
\end{equation}
Since $\alpha=\tau L/2<1$, combining \eqref{eq:R-pd3o} and \eqref{eq:descent-pd3o},
\begin{equation*}
    R_k^{\PD}
    \le \rho_{\PD}(V_k^{\PD}-V_{k+1}^{\PD}),
    \qquad
    \boxed{\rho_{\PD}:=\frac{(2\alpha-1)_+}{1-\alpha}}.
\end{equation*}
Since $H\succeq0$, $D_k^{\PD}(x,s)\ge0$, Theorem~\ref{thm:abstract-transfer} gives
\begin{equation*}
    \calL(\bar x^K,s)-\calL(x,\bar s^K)
    \le
    \frac{D_0^{\PD}(x,s)+\rho_{\PD}V_0^{\PD}}{K},
\end{equation*}
where
\[
    \bar x^K:=\frac1K\sum_{k=0}^{K-1}x^{k},    \qquad     \bar s^K:=\frac1K\sum_{k=0}^{K-1}s^{k+1}.
\]
This proves the PD3O part of Theorem~\ref{thm:unified}.

\subsection{Verification for PDDY/AFBA}\label{sec:pddy}

There exists \(\theta\in(3/4,1]\) such that
\begin{equation}\label{eq:pddy-condition}
    \alpha<1,\qquad
    \theta\beta\le1,\qquad
    \left[\alpha+4(1-\alpha)\theta(1-\theta)\right]\beta<1.
\end{equation}
Choose \(\varepsilon>0\) small enough so that
\begin{equation}\label{eq:pddy-eps-condition}
     c_{\theta}
    :=
    \alpha
    +4(1-\alpha)\theta(1-\theta)
    +4\varepsilon(1-\alpha)(1-\theta)<1/\beta.
\end{equation}
Define
\[
    M_\theta:=\Id-(2\theta-1)\tau\eta AA^\top, \qquad
     N_{\theta}
    :=
    \Id- c_{\theta}\tau\eta AA^\top.
\]
Then 
\[
    M_\theta\succeq0,\qquad
     N_{\theta}
    \succeq
    \nu_{\theta}\Id,
    \qquad
    \nu_{\theta}
    :=
    1- c_{\theta}\beta>0.
\]
The distance term for the one-step gap is
\begin{equation*}
    D_k^{\PDDY}(x,s)
    :=\frac1{2\tau}\norm{x^k-x}^2+\frac1{2\eta}\norm{s^k-s}^2.
\end{equation*}
The convergence Lyapunov is
\begin{equation*}
    V_k^{\PDDY}
    :=\frac1{2\tau}\norm{x^k-x^\star}^2
      +\frac1{2\eta}\norm{s^k-s^\star}^2
      +(1-\alpha)\frac{2\theta-1}{2\eta}\normM{\delta_s^{k-1}}{M_\theta}^2,
\end{equation*}
with the convention $s^{-1}=s^0$ if needed.

For notational simplicity, the displayed inequalities are written from $k=0$.  If the initial variables are not chosen so that the subgradient $q_g^0\in\partial g(y^0)$ below is defined by the same formula, one may start the summation from $k=1$; this changes only the constant in the final $O(1/K)$ bound.

\subsubsection{One-step gap inequality}

The proximal steps imply
\begin{align*}
    q_g^k
    &:=\frac1\tau(x^k-y^k)-A^\top s^k-\nabla f(x^k)
      \in\partial g(y^k), \\
    q_h^{k+1}
    &:=Ay^k+\frac1\eta\delta_s^k
      \in\partial h^*(s^{k+1}).
\end{align*}
Using convexity of $g$, convexity of $h^*$, and smoothness of $f$ at $x^k$,
\begin{align*}
    g(y^k)-g(x)&\le \ip{q_g^k}{y^k-x},\\
    f(y^k)-f(x)&\le \ip{\nabla f(x^k)}{y^k-x}+\frac L2\norm{y^k-x^k}^2,\\
    h^*(s^{k+1})-h^*(s)&\le \ip{q_h^{k+1}}{s^{k+1}-s}.
\end{align*}
Combining these inequalities gives
\begin{align}\label{eq:pddy-gap-pre}
&\calL(y^k,s)-\calL(x,s^{k+1})\\
&\le
\frac L2\norm{y^k-x^k}^2
+\frac1\eta\ip{\delta_s^k}{s^{k+1}-s}
+\frac1\tau\ip{x^k-y^k}{y^k-x}
+\ip{x-y^k}{A^\top\delta_s^k}.
\notag
\end{align}
Using \eqref{eq:pddy-x}, the last two terms in \eqref{eq:pddy-gap-pre} can be reorganized as
\begin{align*}
&\frac1\tau\ip{x^k-y^k}{y^k-x}
+\ip{x-y^k}{A^\top \delta_s^k}\\
&=\frac1\tau\ip{\delta_x^k}{x^{k+1}-x}
  -\ip{\delta_x^k}{A^\top \delta_s^k}\\
  &=\frac1\tau\ip{\delta_x^k}{x^{k+1}-x}+\frac1{2\tau}\norm{\delta_x^k}^2-\frac1{2\tau}\norm{x^k-y^k}^2
   +\frac1{2\eta}\normM{\delta_s^k}{\tau\eta AA^\top}^2.
\end{align*}
Substituting into \eqref{eq:pddy-gap-pre}, we obtain
\begin{equation}\label{eq:gap-pddy}
\begin{aligned}
&\calL(y^k,s)-\calL(x,s^{k+1})\\
&\le D_k^{\PDDY}(x,s)-D_{k+1}^{\PDDY}(x,s)
+\frac{2\alpha-1}{2\tau}\norm{x^k-y^k}^2
-\frac1{2\eta}\normM{\delta_s^k}{\Id-\tau\eta AA^\top}^2.
\end{aligned}
\end{equation}
This is the standard one-step gap inequality. The difference from the standard proof is that the last two terms are allowed to be positive and are later controlled by the Lyapunov descent.

From \eqref{eq:pddy-x},
\[
    x^k-y^k=\delta_x^k+\tau A^\top\delta_s^k,
\]
so
\begin{equation}\label{eq:pddy-bound-x-y}
    \norm{x^k-y^k}^2
    \le 2\norm{\delta_x^k}^2
      +2\tau^2\|A\|^2\norm{\delta_s^k}^2.
\end{equation}
Also,
\begin{equation}\label{eq:pddy-bound-indef}
    -\normM{\delta_s^k}{\Id-\tau\eta AA^\top}^2
    \le (\beta-1)_+\norm{\delta_s^k}^2.
\end{equation}
Combining \eqref{eq:gap-pddy}, \eqref{eq:pddy-bound-x-y}, and \eqref{eq:pddy-bound-indef}, we get
\begin{equation*}
    \calL(y^k,s)-\calL(x,s^{k+1})
    \le D_k^{\PDDY}(x,s)-D_{k+1}^{\PDDY}(x,s)+R_k^{\PDDY},
\end{equation*}
where
\begin{equation*}
\begin{aligned}
R_k^{\PDDY}:={}&2p_\tau\norm{\delta_x^k}^2+\left(2p_\tau\tau^2\|A\|^2+\frac{(\beta-1)_+}{2\eta}\right)
\norm{\delta_s^k}^2, \qquad
    p_\tau:=\frac{(2\alpha-1)_+}{2\tau}.
\end{aligned}
\end{equation*}

\subsubsection{Convergence Lyapunov descent}

Define
\begin{equation*}
    T_1:=-\ip{A^\top\delta_s^k}{\delta_x^k}
    +\ip{x^\star-y^k}{\nabla f(x^k)-\nabla f(x^\star)}.
\end{equation*}
The PDDY fundamental equality is
\begin{align}\label{eq:pddy-fundamental}
&\ip{y^k-x^\star}{q_g^k-q_g^\star}
 +\ip{s^{k+1}-s^\star}{q_h^{k+1}-q_h^\star}\\
&=\frac1\tau\ip{x^{k+1}-x^\star}{\delta_x^k}
 +\frac1\eta\ip{s^{k+1}-s^\star}{\delta_s^k}
 +T_1.
\notag
\end{align}
Here $q_g^\star=-A^\top s^\star-\nabla f(x^\star)$ and $q_h^\star=Ax^\star$.  By monotonicity, the left-hand side is nonnegative.  
Following the proof of~\cite[Lemma 3]{YanLi2024}, we first have
\begin{align}
T_1
&\le
\frac{\alpha}{2\tau}\norm{\delta_x^k}^2
+\frac{\alpha\tau}{2}\norm{A^\top\delta_s^k}^2
-(1-\alpha)\ip{A\delta_x^k}{\delta_s^k}.
\label{eq:pddy-T1-first}
\end{align}
Then, for the cross term, we have the upper bound~\cite[Equation (21)]{YanLi2024}
\begin{equation}\label{eq:pddy-cross-history}
    -\ip{A\delta_x^k}{\delta_s^k}
    \le
    \frac1{2\eta}
    \left(
        \normM{\delta_s^{k-1}}{M_\theta}^2
        -
        \normM{\delta_s^k}{M_\theta}^2
    \right)
    +2(1-\theta)\tau\norm{A^\top\delta_s^k}^2,
\end{equation}
and Young's inequality gives
\begin{equation}\label{eq:pddy-cross-young}
    -\ip{A\delta_x^k}{\delta_s^k}
    \le
    (1-\theta+\varepsilon)\tau\norm{A^\top\delta_s^k}^2+\frac1{4(1-\theta+\varepsilon)\tau}\norm{\delta_x^k}^2 .
\end{equation}
The difference here is that we use a weighted split of the cross term. Taking the weighted combination of the two upper bounds gives
\begin{align}
-\ip{A\delta_x^k}{\delta_s^k}
\le{}&
(2\theta-1)
\frac1{2\eta}
\left(
        \normM{\delta_s^{k-1}}{M_\theta}^2
        -
        \normM{\delta_s^k}{M_\theta}^2
\right)+
2(2\theta-1)(1-\theta)\tau\norm{A^\top\delta_s^k}^2
\notag\\
&+
2(1-\theta)(1-\theta+\varepsilon)\tau\norm{A^\top\delta_s^k}^2+
\frac{(1-\theta)}
     {2(1-\theta+\varepsilon)\tau}
     \norm{\delta_x^k}^2 .
\label{eq:pddy-cross-split}
\end{align}
Thus, we have 
\begin{align}
T_1
\le{}&
\frac1{2\tau}\norm{\delta_x^k}^2
-
\frac{(1-\alpha)\varepsilon}
     {2\tau(1-\theta+\varepsilon)}
     \norm{\delta_x^k}^2
\notag\\
&+
(1-\alpha)(2\theta-1)
\frac1{2\eta}
\left(
        \normM{\delta_s^{k-1}}{M_\theta}^2
        -
        \normM{\delta_s^k}{M_\theta}^2
\right)+
\frac{\tau}{2}c_\theta\norm{A^\top\delta_s^k}^2.
\label{eq:pddy-T1-bound}
\end{align}
Combining \eqref{eq:pddy-fundamental} and \eqref{eq:pddy-T1-bound}, using the distance identity for the first two inner products and $N_\theta=\Id-c_\theta\tau\eta AA^\top$, yields
\begin{align*}
V_{k+1}^{\PDDY}
\le{}& V_k^{\PDDY}
-
\frac{(1-\alpha)\varepsilon}
     {2\tau(1-\theta+\varepsilon)}
     \norm{\delta_x^k}^2
-\frac1{2\eta}\normM{\delta_s^k}{N_\theta}^2\\
\leq{}&V_k^{\PDDY}
    -a_\theta^{\PDDY}\norm{\delta_x^k}^2
    -b_\theta^{\PDDY}\norm{\delta_s^k}^2,
\end{align*}
where
\begin{equation}\label{eq:pddy-new-a-b}
    a_{\theta}^{\PDDY}
    :=
    \frac{(1-\alpha)\varepsilon}
         {2\tau(1-\theta+\varepsilon)}
    >0,
    \qquad
    b_{\theta}^{\PDDY}
    :=
    \frac{\nu_{\theta}}{2\eta}
    >0.
\end{equation}
Therefore the residual from the one-step gap inequality satisfies
\begin{equation*}
    R_k^{\PDDY}
    \le \rho_{\PDDY}(V_k^{\PDDY}-V_{k+1}^{\PDDY}),
\end{equation*}
where
\begin{equation*}
    \boxed{\rho_{\PDDY}
    :=\max\left\{
       \frac{2p_\tau}{a_\theta^{\PDDY}},
       \frac{2p_\tau\tau^2\|A\|^2+(\beta-1)_+/(2\eta)}{b_\theta^{\PDDY}}
    \right\}}.
\end{equation*}
Since $D_k^{\PDDY}(x,s)\ge0$, Theorem~\ref{thm:abstract-transfer} gives
\begin{equation*}
    \calL(\bar y^K,s)-\calL(x,\bar s^K)
    \le
    \frac{D_0^{\PDDY}(x,s)+\rho_{\PDDY}V_0^{\PDDY}}{K},
\end{equation*}
where
\[
    \bar y^K:=\frac1K\sum_{k=0}^{K-1}y^{k},    \qquad     \bar s^K:=\frac1K\sum_{k=0}^{K-1}s^{k+1}.
\]
This proves the PDDY/AFBA part of Theorem~\ref{thm:unified}.

\subsection{Verification for PAPC/PDFP$^2$O}\label{sec:papc}

Assume $g\equiv0$.  We can find $\theta\in(3/4,1]$ such that
\begin{equation*}
    \alpha<1,
    \qquad
    \theta\beta< 1.
\end{equation*}
Define
\begin{equation*}
    R_0:=\Id-\tau\eta AA^\top,
    \qquad
    R_\theta:=\Id-(2\theta-1)\tau\eta AA^\top,
    \qquad
    S_\theta:=\Id-\theta\tau\eta AA^\top.
\end{equation*}
Then $R_\theta\succ0$ and $S_\theta\succ(1-\theta\beta)I$.

For arbitrary test points $(x,s)$, set
\begin{equation*}
    D_k^{\PAPC}(x,s)
    :=\frac1{2\tau}\norm{x^k-x}^2+\frac1{2\eta}\normM{s^k-s}{R_0}^2.
\end{equation*}
This distance can be indefinite when $\beta>1$.  The positive Lyapunov is
\begin{equation*}
    V_k^{\PAPC}
    :=\frac1{2\tau}\norm{x^k-x^\star}^2+\frac1{2\eta}\normM{s^k-s^\star}{R_\theta}^2.
\end{equation*}

\subsubsection{One-step gap inequality}

The PAPC updates imply the special subgradient identity
\begin{equation}\label{eq:papc-qh}
    q_h^{k+1}:=Ax^{k+1}+\frac1\eta R_0\delta_s^k
    \in\partial h^*(s^{k+1}).
\end{equation}
Using smoothness of $f$ and convexity of $h^*$,
\begin{align*}
&\quad\calL(x^{k+1},s)-\calL(x,s^{k+1})\\
&=f(x^{k+1})-f(x)+\ip{Ax^{k+1}}{s}-\ip{Ax}{s^{k+1}}
+h^*(s^{k+1})-h^*(s)\\
&\le
\ip{\nabla f(x^k)}{x^{k+1}-x}+\frac L2\norm{\delta_x^k}^2
+\ip{q_h^{k+1}}{s^{k+1}-s}\\
&\quad
+\ip{Ax^{k+1}}{s}-\ip{Ax}{s^{k+1}}.
\end{align*}
Substituting \eqref{eq:papc-qh}, the inner product terms combine to
\[
    \ip{\nabla f(x^k)+A^\top s^{k+1}}{x^{k+1}-x}
    +\frac1\eta\ip{R_0\delta_s^k}{s^{k+1}-s}.
\]
By the primal update,
\[
    \nabla f(x^k)+A^\top s^{k+1}=\frac1\tau\delta_x^k.
\]
Thus
\begin{align*}
&\calL(x^{k+1},s)-\calL(x,s^{k+1})\\
&\le
\frac1\tau\ip{\delta_x^k}{x^{k+1}-x}
+\frac1\eta\ip{R_0\delta_s^k}{s^{k+1}-s}
+\frac L2\norm{\delta_x^k}^2.
\end{align*}
Applying the distance identity in the primal and dual variables gives
\begin{equation*}
\begin{aligned}
\calL(x^{k+1},s)-\calL(x,s^{k+1})
&\le
D_k^{\PAPC}(x,s)-D_{k+1}^{\PAPC}(x,s)
+\frac{2\alpha-1}{2\tau}\norm{\delta_x^k}^2
-\frac1{2\eta}\normM{\delta_s^k}{R_0}^2.
\end{aligned}
\end{equation*}
Let
\begin{equation*}
    p_\tau:=\frac{(2\alpha-1)_+}{2\tau},
    \qquad
    p_s:=\frac{(\beta-1)_+}{2\eta}.
\end{equation*}
Since
\[
    -\frac1{2\eta}\normM{\delta_s^k}{R_0}^2
    \le p_s\norm{\delta_s^k}^2,
\]
we may take
\begin{equation}\label{eq:R-papc}
    R_k^{\PAPC}
    :=p_\tau\norm{\delta_x^k}^2+p_s\norm{\delta_s^k}^2.
\end{equation}

\subsubsection{Large-step PAPC descent}

In the notation of the large-step PAPC analysis in~\cite[Theorem 1]{LiYan2021}, with $P=D=\Id$ and with no infimal-convolution term, we have
\begin{align}
    V_{k+1}^{\PAPC} -V_k^{\PAPC}
    &\le 
    -a_\theta^{\PAPC}\norm{\delta_x^k}^2
    -\frac{1-\alpha}{2\eta}\norm{\delta_s^k}_{S_\theta}^2,\nonumber\\
    &\leq 
    -a_\theta^{\PAPC}\norm{\delta_x^k}^2
    -b_\theta^{\PAPC}\norm{\delta_s^k}^2,\label{eq:descent-papc}
\end{align}
where
\begin{equation*}
    a_\theta^{\PAPC}
    :=\frac1{2\tau}\,
      \frac{(4\theta-3)(1-\alpha)}{1-\alpha+(4\theta-3)\alpha}>0,
    \qquad
    b_\theta^{\PAPC}
    :=\frac{1-\alpha}{2\eta}(1-\theta\beta)>0.
\end{equation*}
The positivity follows from $\theta>3/4$, $\alpha <1$, and $\theta\beta<1$.

Combining \eqref{eq:R-papc} with \eqref{eq:descent-papc},
\begin{equation*}
    R_k^{\PAPC}
    \le \rho_{\PAPC}(V_k^{\PAPC}-V_{k+1}^{\PAPC}),
\end{equation*}
where
\begin{equation*}
    \boxed{\rho_{\PAPC}
    :=\max\left\{
    \frac{p_\tau}{a_\theta^{\PAPC}},
    \frac{p_s}{b_\theta^{\PAPC}}
    \right\}}.
\end{equation*}

\subsubsection{Lower bound for the indefinite distance}

When $\beta\le1$, $R_0\succeq0$, and $D_k^{\PAPC}(x,s)\ge0$.  The interesting large-step case is $\beta>1$, where $R_0$ may be indefinite.  Since
\begin{equation*}
    R_0\succeq -\kappa_0 R_\theta,
    \qquad
    \kappa_0:=\frac{(\beta-1)_+}{1-\theta\beta+(1-\theta)\beta}\geq 0,
\end{equation*}
we have
\begin{align*}
D_k^{\PAPC}(x,s)
&\ge -\frac{\kappa_0}{2\eta}\normM{s^k-s}{R_\theta}^2\\
&\ge -\frac{\kappa_0}{\eta}\normM{s^k-s^\star}{R_\theta}^2
      -\frac{\kappa_0}{\eta}\normM{s-s^\star}{R_\theta}^2\\
&\ge -2\kappa_0 V_k^{\PAPC}
      -\frac{\kappa_0}{\eta}\normM{s-s^\star}{R_\theta}^2.
\end{align*}
Thus \eqref{eq:H3} holds with
\begin{equation*}
    \kappa_{\PAPC}:=2\kappa_0,
    \qquad
    B_{\PAPC}(x,s):=\frac{\kappa_0}{\eta}\normM{s-s^\star}{R_\theta}^2.
\end{equation*}
Applying Theorem~\ref{thm:abstract-transfer} gives
\begin{equation*}
\begin{aligned}
&\calL\left(\bar x^K,s\right)
 -\calL\left(x,\bar s^K\right)\le
\frac{D_0^{\PAPC}(x,s)+B_{\PAPC}(x,s)
 +(\rho_{\PAPC}+\kappa_{\PAPC})V_0^{\PAPC}}{K},
\end{aligned}
\end{equation*}
where
\[
    \bar x^K:=\frac1K\sum_{k=0}^{K-1}x^{k+1},    \qquad     \bar s^K:=\frac1K\sum_{k=0}^{K-1}s^{k+1}.
\]
This proves the PAPC/PDFP$^2$O part of Theorem~\ref{thm:unified}.

\section{A counterexample to the separated large-step condition}
\label{sec:counterexample}

In this section, we provide an example showing that the separated condition
\[
    \tau L<2,
    \qquad
    \tau\eta\|A\|^2<\frac43
\]
cannot guarantee convergence for the general three-function problem.
Define two subspaces
\[
    \mathcal P:=\{x:x_1=x_2/3\},\qquad \mathcal Q:=\{x:x_1=x_2\}.
\]
Let
\[
    A=I_2,
    \qquad
    f(x)=\frac12 x_1^2,\qquad  g(x):=\iota_{\mathcal Q}(x),\qquad h^*(s):=\iota_{\mathcal P}(s),
\]
where \(\iota_C\) denotes the indicator function of the set \(C\).  Since \(\mathcal P\) is a subspace, its conjugate satisfies
\[
    h=(h^*)^*=\iota_{\mathcal P^\perp}.
\]
Therefore, the primal problem is
\[
    \min_{x\in\mathbb R^2}
    \frac12 x_1^2+\iota_{\mathcal Q}(x)+\iota_{\mathcal P^\perp}(x),
\]
whose unique primal solution is
\[
    x^*=0.
\]
Moreover, \((x^*,s^*)=(0,0)\) is a saddle point.  The gradient of \(f\) is Lipschitz continuous with \(L=1,\)
and \(\|A\|^2=1\).

Choose
\[
    \tau=\frac{39}{20},
    \qquad
    \eta=\frac23.
\]
Then
\[
    \alpha=\frac{\tau L}{2}=\frac{39}{40}<1,
    \qquad
    \beta=\tau\eta\|A\|^2=\frac{13}{10}<\frac43.
\]
Thus the example satisfies the separated large-step condition.

\subsection{Divergence of PDDY/AFBA}

For this example, the PDDY/AFBA iteration becomes
\[
\begin{aligned}
    s^{k+1}&=P_{\mathcal P}(s^k+\eta y^k),\\
    x^{k+1}&=y^k-\tau(s^{k+1}-s^k),\\
    y^{k+1}&=P_{\mathcal Q}\bigl(x^{k+1}-\tau s^{k+1}-\tau \nabla f(x^{k+1})\bigr),
\end{aligned}
\]
where \(P_{\mathcal P}\) and \(P_{\mathcal Q}\) are the orthogonal projections onto \(\mathcal P\) and \(\mathcal Q\), respectively.

After one iteration, we have
\[
    s^k\in\mathcal P,
    \qquad
    y^k\in\mathcal Q.
\]
Hence, we may write
\[
    s^k=\xi_k \begin{pmatrix}1\\3\end{pmatrix},
    \qquad
    y^k=\zeta_k \begin{pmatrix}1\\1\end{pmatrix}.
\]
A direct calculation gives
\[\begin{aligned}
    \xi_{k+1}&=\xi_k+\frac{2}{5}\eta \zeta_k=\xi_k+\frac{4}{15} \zeta_k,\\
    \zeta_{k+1}&=-2\tau \xi_k+
    \Bigl(1-\frac{1}{2}\tau -\frac{8}{5}\tau\eta +\frac{1}{5}\tau^2\eta \Bigr)\zeta_k=-\frac{39}{10} \xi_k-\frac{387}{250}\zeta_k.
\end{aligned}
\]   
Therefore
\[
    \binom{\xi_{k+1}}{\zeta_{k+1}}
    =M_{\rm PDDY}
    \binom{\xi_k}{\zeta_k}, \qquad\mbox{where } M_{\rm PDDY}
    =
    \begin{pmatrix}
        1 & {4}/{15}\\[2mm]
        -3.9 &  -1.548
    \end{pmatrix}.
\]
The matrix \(M_{\rm PDDY}\) has a real eigenvalue smaller than \(-1\). Consequently, the PDDY/AFBA iteration diverges for any initialization with a nonzero component in the corresponding unstable eigendirection.

\subsection{Divergence of standard PD3O}

The same example also destabilizes the standard PD3O iteration
\[
\begin{aligned}
    x^k&=P_{\mathcal Q}z^k,\\
    s^{k+1}
    &=P_{\mathcal P}\Bigl((I-\tau\eta I)s^k
    +\eta\bigl(2x^k-z^k-\tau\nabla f(x^k)\bigr)\Bigr),\\
    z^{k+1}&=x^k-\tau\nabla f(x^k)-\tau s^{k+1}.
\end{aligned}
\]
Since \(s^k\in\mathcal P\) after one iteration, write
\[
    s^k=\xi_k \begin{pmatrix}1\\ 3\end{pmatrix},
    \qquad
    z^k=\binom{z_1^k}{z_2^k},\qquad w^k:=\begin{pmatrix}\xi_k\\ z_1^k\\ z_2^k\end{pmatrix}.
\]
Then we have 
\[
    w^{k+1}=M_{\rm PD3O}w^k,\qquad\text{where }M_{\rm PD3O}
    =
    \begin{pmatrix}
        -0.3 & 0.135 & {1}/{600}\\
        0.585 & -0.73825 & -0.47825\\
        1.755 & -0.28975 & 0.49025
    \end{pmatrix}.
\]
A direct computation shows that \(M_{\rm PD3O}\) has a real eigenvalue smaller than \(-1\). Thus, the standard PD3O iteration also diverges for suitable initializations.

\section{Conclusion}
\label{sec:conclusion}

We have shown that the usual halving of the primal smooth stepsize in ergodic primal-dual gap proofs is not intrinsic.  The residual-to-gap transfer theorem separates the one-step gap calculation from the Lyapunov descent: residual terms that are positive when $\tau >1/L$ can still be summed because they are controlled by the decrease of a Lyapunov function.  This yields gap rates with $\tau <2/L$ for Condat--V\~u, PD3O, AFBA/PDDY, and PAPC/PDFP$^2$O, while preserving their distinct product restrictions.

The counterexample in Section~\ref{sec:counterexample} shows that the PAPC large-step rectangle cannot be extended unchanged to the general three-function setting.  An interesting open direction is whether standard PD3O admits a relaxed product condition beyond $\tau\eta\|A\|^2\le1$ under a lifted Lyapunov involving both primal and dual history terms.  Such a result would require a different argument from the proof used here.

\section*{Acknowledgments}
This work was partially supported by the National Natural Science Foundation of China (72495131, 82441027), Guangdong Provincial Key Laboratory of Mathematical Foundations for Artificial Intelligence (2023B1212010001), Shenzhen Stability Science Program, and the Shenzhen Science and Technology Program under grant number JCYJ20250604141043020. The authors used ChatGPT Pro by OpenAI as an auxiliary tool to help explore the counterexample and organize parts of the proof. The authors independently verified all calculations and arguments and assume responsibility for all content.

\bibliographystyle{plain}
\bibliography{PDgap}

\end{document}